  \newtheorem{theorem}{Theorem}
  \newtheorem{corollary}{Corollary}
  \newtheorem{proposition}{Proposition}
\newcommand{\qede}{\hspace*{\fill}$\Diamond$\medskip}
\newcommand{\egcaption}[1]{({#1})}
\newcommand{\op}[1]{\ensuremath{\operatorname{#1}}}
\renewcommand{\Re}{\op{Re}\,}
\renewcommand{\Im}{\op{Im}\,}
\newcommand{\e}{\mathrm{e}}
\newcommand{\md}{\mathrm{d}}
\newcommand{\id}{\,\mathrm{d}}
\newcommand{\ontop}[2]{\ensuremath{\genfrac{}{}{0pt}{}{#1}{#2}}}
\newcommand{\pFq}[5]{\ensuremath{{}_{#1}F_{#2} \left( \genfrac{}{}{0pt}{}{#3}{#4} \bigg| {#5} \right)}}
\newcommand{\Li}{\op{Li}}
\newcommand{\Gl}[2]{\op{Gl}_{#1}\left( {#2} \right)}
\newcommand{\Cl}[2]{\op{Cl}_{#1}\left( {#2} \right)}
\newcommand{\mgl}[1]{\Gl{#1}{\frac{\pi}{3}}}
\newcommand{\mcl}[1]{\Cl{#1}{\frac{\pi}{3}}}
\newcommand{\Ls}[2]{\op{Ls}_{#1}\left( {#2} \right)}
\newcommand{\LsD}[3]{\op{Ls}_{#1}^{(#2)}\left( {#3} \right)}
\newcommand{\LshD}[3]{\op{Lsh}_{#1}^{(#2)}\left( {#3} \right)}
\newcommand{\Lsc}[3]{\op{Lsc}_{#1,#2}\left( {#3} \right)}
\begin{document}

\conferenceinfo{ISSAC'11,} {June 8--11, 2011, San Jose, California, USA.} 
\CopyrightYear{2011} 
\crdata{978-1-4503-0675-1/11/06} 
\clubpenalty=10000 
\widowpenalty = 10000

\title{Special Values of Generalized Log-sine Integrals}

\numberofauthors{2}

\author{
\alignauthor
Jonathan M. Borwein\\
       \affaddr{University of Newcastle}\\
       \affaddr{Callaghan, NSW 2308, Australia}\\
       \email{jonathan.borwein@newcastle.edu.au}
\alignauthor
Armin Straub\\
       \affaddr{Tulane University}\\
       \affaddr{New Orleans, LA 70118, USA}\\
       \email{astraub@tulane.edu}
}

\date{\today}

\maketitle

\begin{abstract}
  We study generalized log-sine integrals at special values. At $\pi$ and
  multiples thereof explicit evaluations are obtained in terms of Nielsen
  polylogarithms at $\pm1$. For general arguments we present algorithmic
  evaluations involving Nielsen polylogarithms at related arguments. In
  particular, we consider log-sine integrals at $\pi/3$ which evaluate in terms
  of polylogarithms at the sixth root of unity. An implementation of our
  results for the computer algebra systems \emph{Mathematica} and SAGE is
  provided.
\end{abstract}

\category{I.1.1}{Symbolic and Algebraic Manipulation}{Expressions and Their Representation}
\category{I.1.2}{Symbolic and Algebraic Manipulation}{Algorithms}

\terms{Algorithms, Theory}

\keywords{log-sine integrals, multiple polylogarithms, multiple zeta values, Clausen functions}

\section{Introduction}

For $n=1,2, \ldots$ and $k\ge0$, we consider the (generalized) \emph{log-sine
integrals} defined by
\begin{equation}\label{eq:slxm}
  \LsD{n}{k}{\sigma}
  := - \int_{0}^{\sigma}\theta^k\,\log^{n-1-k} \left|2\,\sin \frac \theta 2\right| \,{\md\theta}.
\end{equation}
The modulus is not needed for $0 \le \sigma \le 2\pi$. For $k=0$ these are the
(basic) log-sine integrals $\Ls{n}{\sigma}:=\LsD{n}{0}{\sigma}$.  Various
log-sine integral evaluations may be found in \cite[\S7.6 \& \S7.9]{lewin2}.

In this paper, we will be concerned with evaluations of the log-sine integrals
$\LsD{n}{k}{\sigma}$ for special values of $\sigma$.  Such evaluations are
useful for physics \cite{lsjk}: log-sine integrals appeared for instance in
recent work on the $\varepsilon$-expansion of various Feynman diagrams in the
calculation of higher terms in the $\varepsilon$-expansion, \cite{dk0-eps, kv0-bin,
dk1-eps, davydychev-eps, kalmykov-eps}. Of particular importance are the
log-sine integrals at the special values $\pi/3$, $\pi/2$, $2\pi/3$, $\pi$. The
log-sine integrals also appear in many settings in number theory and analysis:
classes of inverse binomial sums can be expressed in terms of generalized
log-sine integrals, \cite{davydychev-bin, mcv}.

In Section \ref{sec:atpi} we focus on evaluations of log-sine and related
integrals at $\pi$. General arguments are considered in Section \ref{sec:lsx}
with a focus on the case $\pi/3$ in Section \ref{sec:lspi3}.  Imaginary
arguments are briefly discussed in \ref{sec:lsimag}.  The  results obtained are
suitable for implementation in a computer algebra system. Such an
implementation is provided for \emph{Mathematica} and SAGE, and is described in
Section \ref{sec:program}. This complements existing packages such as
\texttt{lsjk} \cite{lsjk} for numerical evaluations of log-sine integrals or
\texttt{HPL} \cite{hpl} as well as \cite{polylog-num} for working with multiple
polylogarithms.

Further motivation for such evaluations was sparked by our recent study
\cite{logsin1} of certain \emph{multiple Mahler measures}.  For $k$ functions
(typically Laurent polynomials) in $n$ variables the multiple Mahler measure
$\mu(P_1,P_2, \ldots, P_k)$, introduced in \cite{klo}, is defined by
\begin{equation*}
  \int_0^1 \cdots \int_0^1 \prod_{j=1}^k \log \left| P_j\left(e^{2\pi i t_1},
    \ldots, e^{2\pi i t_n}\right)\right| \md t_1 \md t_2 \ldots \md t_n.
\end{equation*}
When $P=P_1=P_2= \cdots =P_k$ this devolves to a \emph{higher Mahler measure},
$\mu_k(P)$, as introduced and examined in \cite{klo}. When $k=1$ both reduce
to the standard (logarithmic) \emph{Mahler measure} \cite{boyd}.

The multiple Mahler measure
\begin{equation}\label{eq:defmu1xys}
  \mu_k(1+x+y_*) := \mu(1+x+y_1, 1+x+y_2, \ldots, 1+x+y_k)
\end{equation}
was studied by Sasaki \cite[Lemma 1]{sasaki} who provided an evaluation of
$\mu_2(1+x+y_*)$. It was observed in \cite{logsin1} that
\begin{equation}\label{eq:mukb}
  \mu_k(1+x+y_*) = \frac1\pi\Ls{k+1}{\frac\pi3} - \frac1\pi\Ls{k+1}{\pi}.
\end{equation}
Many other Mahler measures studied in \cite{logsin1,logsin2} were shown to have
evaluations involving generalized log-sine integrals at $\pi$ and $\pi/3$ as
well.

To our knowledge, this is the most exacting such study undertaken --- perhaps
because it would be quite impossible without modern computational tools and
absent a use of  the quite recent understanding of multiple polylogarithms and
multiple zeta values \cite{b3l}.

\section{Preliminaries}

In the following, we will denote the \emph{multiple polylogarithm} as studied
for instance in \cite{mcv} and \cite[Ch. 3]{bbg} by
\begin{equation*}
  \Li_{a_1,\ldots,a_k}(z)
  := \sum_{n_1>\cdots>n_k>0} \frac{z^{n_1}}{n_1^{a_1} \cdots n_k^{a_k}}.
\end{equation*}
For our purposes, the $a_1,\ldots,a_k$ will usually be positive integers and
$a_1\ge2$ so that the sum converges for all $|z|\le1$.  For example,
$\Li_{2,1}(z) =\sum_{k=1}^\infty \frac{z^k}{k^2}\sum_{j=1}^{k-1} \frac1j$.  In
particular, $\Li_k(x) := \sum_{n=1}^\infty \frac{x^n}{n^k}$ is the
\emph{polylogarithm of order $k$}. The usual notation will be used for
repetitions so that, for instance, $\Li_{2,\{1\}^3}(z)=\Li_{2,1,1,1}(z)$.

Moreover, \emph{multiple zeta values} are denoted by
\begin{equation*}
  \zeta(a_1,\ldots,a_k) := \Li_{a_1,\ldots,a_k}(1).
\end{equation*}
Similarly, we consider the \emph{multiple Clausen functions} ($\op{Cl}$) and
\emph{multiple Glaisher functions} ($\op{Gl}$) of depth $k$ and weight
$w=a_1+\ldots+a_k$ defined as
\begin{align}
  \Cl{a_1,\ldots,a_k}{\theta} &= \left\{ \begin{array}{ll}
    \Im \Li_{a_1,\ldots,a_k}(e^{i\theta}) & \text{if $w$ even}\\
    \Re \Li_{a_1,\ldots,a_k}(e^{i\theta}) & \text{if $w$ odd}
  \end{array} \right\}, \label{eq:defcl}\\
  \Gl{a_1,\ldots,a_k}{\theta} &= \left\{ \begin{array}{ll}
    \Re \Li_{a_1,\ldots,a_k}(e^{i\theta}) & \text{if $w$ even}\\
    \Im \Li_{a_1,\ldots,a_k}(e^{i\theta}) & \text{if $w$ odd}
  \end{array} \right\}, \label{eq:defgl}
\end{align}
in accordance with \cite{lewin2}.
Of particular importance will be the case of $\theta
= \pi/3$ which has also been considered in \cite{mcv}.

Our other notation and usage is largely consistent with that in \cite{lewin2}
and that in the newly published \cite{NIST} in which most of the requisite
material is described.  Finally, a recent elaboration of what is meant when we
speak about evaluations and ``closed forms'' is to be found in \cite{closed}.

\section[Evaluations at pi]{Evaluations at $\pi$}
\label{sec:atpi}

\subsection[Basic log-sine integrals at pi]{Basic log-sine integrals at $\pi$}
\label{sec:lspi}

The exponential generating function, \cite{lewin,lewin2},
\begin{equation}\label{eq:lsbpiegf}
  -\frac1\pi \sum _{m=0}^{\infty }\Ls{m+1}{\pi} \frac {\lambda^m}{m!}
  = \frac{\Gamma\left(1+\lambda\right)}{\Gamma^2\left(1+\frac\lambda2\right)}
  = \binom{\lambda}{\frac\lambda2}
\end{equation}
is well-known and implies the recurrence
\begin{align}\label{eq:lsatpirec}
  \frac{(-1)^n}{n!} &\Ls{n+2}{\pi} = \pi\, \alpha(n+1) \nonumber\\
  &+ \sum _{k=1}^{n-2} \frac{(-1)^k}{(k+1)!} \,\alpha  (n-k) \Ls{k+2}{\pi},
\end{align}
where $\alpha(m) = (1-2^{1-m}) \zeta(m)$.

\begin{example}\label{ex:pi}
  \egcaption{Values of $\Ls{n}{\pi}$}
  We have $\Ls{2}{\pi}=0$ and
  \begin{align*}\label{eq:pi}
    -\Ls{3}{\pi} &= {\frac {1}{12}}\,\pi^3\\
    \Ls{4}{\pi} &= \frac32\pi\,\zeta(3)\\
    -\Ls{5}{\pi} &= \frac{19}{240}\,\pi^5\\
    \Ls{6}{\pi} &= {\frac{45}{2}}\,\pi \,\zeta(5)+\frac54\,\pi^3\zeta(3)\\
    -\Ls{7}{\pi} &= {\frac {275}{1344}}\,\pi^7+{\frac{45}{2}}\,\pi \, \zeta(3)^2\\
    \Ls{8}{\pi} &= {\frac {2835}{4}}\,\pi \,\zeta(7)+ {\frac{315}{8}}\,\pi^3\zeta(5)
      +{\frac{133}{32}}\,\pi^5\zeta(3),
  \end{align*}
  and so forth. The fact that each integral is a multivariable rational
  polynomial in $\pi$ and zeta values follows directly from the recursion
  \eqref{eq:lsatpirec}. Alternatively, these values may be conveniently obtained
  from \eqref{eq:lsbpiegf} by a computer algebra system.
  For instance, in \emph{Mathematica} the code\\
  \texttt{FullSimplify[D[-Binomial[x,x/2], \{x,6\}] /.x->0]}\\
  produces the above evaluation of $\Ls{6}{\pi}$.
  \qede
\end{example}

\subsection{The log-sine-cosine integrals}

The log-sine-cosine integrals
\begin{equation}\label{eq:deflsc}
  \Lsc{m}{n}{\sigma} := -\int_{0}^{\sigma}\log^{m-1} \left|2\,\sin \frac \theta 2\right|\,\log^{n-1} \left|2\,\cos \frac \theta 2\right| \id\theta
\end{equation}
appear in physical applications as well, see for instance \cite{dk1-eps,kalmykov-eps}.
They have also been considered by Lewin, \cite{lewin,lewin2}, and he
demonstrates how their values at $\sigma=\pi$ may be obtained much the same as
those of the log-sine integrals in Section \ref{sec:lspi}. As observed in
\cite{logsin2}, Lewin's result can be put in the form
\begin{align}\label{eq:egflscpi}
  -\frac1\pi \sum _{m,n=0}^\infty & \Lsc{m+1}{n+1}{\pi} \frac{x^m}{m!}\frac{y^n}{n!}
  = \frac{2^{x+y}}{\pi} \frac{\Gamma\left( \frac{1+x}{2} \right) \Gamma\left( \frac{1+y}{2} \right)}
  {\Gamma\left( 1+\frac{x+y}2 \right)} \nonumber\\
  &= \binom{x}{x/2}\binom{y}{y/2} \frac{\Gamma\left( 1+\frac{x}{2} \right)
  \Gamma\left( 1+\frac{y}{2} \right)}{\Gamma\left( 1+\frac{x+y}{2} \right)}.
\end{align}
The last form makes it clear that this is an extension of
\eqref{eq:lsbpiegf}.

The notation $\op{Lsc}$ has been introduced in \cite{dk1-eps} where evaluations
for other values of $\sigma$ and low weight can be found.

\subsection[Log-sine integrals at pi]{Log-sine integrals at $\pi$}
\label{sec:lsxpi}

As Lewin \cite[\S7.9]{lewin2} sketches, at least for small values of $n$ and
$k$, the generalized log-sine integrals $\LsD{n}{k}{\pi}$ have closed forms
involving zeta values and Kummer-type constants such as $\Li_4(1/2)$. This
will be made more precise in Remark~\ref{rk:lsxpiLi}. Our analysis starts with
the generating function identity
\begin{align}\label{eq:lspiegf}
 &-\sum_{n,k \ge 0} \LsD{n+k+1}{k}{\pi} \frac{\lambda^n}{n!}\frac{(i\mu)^k}{k!}
 = \int_0^\pi \left( 2\sin\frac\theta2 \right)^\lambda \e^{i\mu\theta} \id\theta \nonumber\\
 &= i\e^{i\pi \frac\lambda2}\,B_1\left(\mu-\tfrac\lambda2,1+\lambda\right)- i{\e^{i\pi \mu}}
 B_{1/2}\left(\mu-\tfrac\lambda2,-\mu-\tfrac\lambda2\right)
\end{align}
given in \cite{lewin2}. Here $B_x$ is the \emph{incomplete Beta} function:
\begin{equation*}
  B_x(a,b) = \int_0^x t^{a-1} (1-t)^{b-1} \id t.
\end{equation*}
We shall show that with care --- because of the singularities at zero ---
\eqref{eq:lspiegf} can be differentiated as needed as suggested by Lewin.

Using the identities, valid for $a,b>0$ and $0<x<1$,
\begin{align*}
  B_x(a,b)
  &= \frac {x^a (1-x)^{b-1}}{a}\pFq21{1-b,1}{a+1}{\frac x{x-1}} \\
  &=\frac {x^a (1-x)^b}{a}\pFq21{a+b,1}{a+1}{x},
\end{align*}
found for instance in \cite[\S8.17(ii)]{NIST},
the generating function \eqref{eq:lspiegf} can be rewritten as
\begin{align*}
    i \e^{i\pi\frac\lambda2} \left( B_1\left(\mu-\frac\lambda2,1+\lambda\right)
    - B_{-1}\left(\mu-\frac\lambda2,1+\lambda\right) \right).
\end{align*}
Upon expanding this  we obtain the following
computationally more accessible generating function for $\LsD{n+k+1}{k}{\pi}$:

\begin{theorem}\label{thm:gfb}
  For $2|\mu| <\lambda <1$ we have
  \begin{align}\label{eq:gfsum}
    -\sum_{n,k \ge 0}&\LsD{n+k+1}{k}{\pi}\frac{\lambda^n}{n!}\frac{(i\mu)^k}{k!} \nonumber\\
    &= i \sum_{n\ge0} \binom{\lambda}{n}
    \frac{(-1)^n \e^{i\pi\frac\lambda2} - \e^{i\pi\mu}}{\mu-\frac\lambda2+n}.
  \end{align}
\end{theorem}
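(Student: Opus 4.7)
The plan is to work from the intermediate representation
\[
  i\e^{i\pi\lambda/2}\bigl(B_1(a,1+\lambda) - B_{-1}(a,1+\lambda)\bigr), \qquad a := \mu-\tfrac\lambda2,
\]
obtained from \eqref{eq:lspiegf} via the two incomplete-Beta identities quoted above, and to expand the difference as a series in $\binom{\lambda}{n}$. Verifying that the right-hand side of \eqref{eq:lspiegf} actually equals this form is immediate: applying the second Beta identity to $B_{1/2}(a,-\mu-\lambda/2)$ (so $x=\tfrac12$ and $x^a(1-x)^b=2^\lambda$) and the first applied to $B_{-1}(a,1+\lambda)$ (so $x=-1$, $x/(x-1)=\tfrac12$, and $(1-x)^{b-1}=2^\lambda$) both yield the \emph{same} hypergeometric series $\pFq21{-\lambda,1}{a+1}{\tfrac12}$, and the prefactors match after using $\e^{i\pi\lambda/2}(-1)^{\mu-\lambda/2} = \e^{i\pi\mu}$.

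For the main step, the plan is to expand $B_1 - B_{-1}$ using the integral definition directly. Inserting the binomial series $(1-t)^\lambda = \sum_{n\ge0}\binom{\lambda}{n}(-t)^n$ into $B_1(a,1+\lambda) = \int_0^1 t^{a-1}(1-t)^\lambda\id t$ and integrating termwise gives
\[
  B_1(a,1+\lambda) = \sum_{n\ge0}\binom{\lambda}{n}\frac{(-1)^n}{a+n}.
\]
For $B_{-1}(a,1+\lambda) = \int_0^{-1} t^{a-1}(1-t)^\lambda\id t$, the substitution $t=-s$ (with the principal branch giving $(-1)^{a-1}=-\e^{i\pi a}$) transforms the integral into $\e^{i\pi a}\int_0^1 s^{a-1}(1+s)^\lambda\id s$, and the expansion $(1+s)^\lambda = \sum_n\binom{\lambda}{n}s^n$ followed by termwise integration produces
\[
  B_{-1}(a,1+\lambda) = \e^{i\pi a}\sum_{n\ge0}\binom{\lambda}{n}\frac{1}{a+n}.
\]
Subtracting, multiplying by $i\e^{i\pi\lambda/2}$, and substituting $a = \mu-\lambda/2$ with the simplification $\e^{i\pi\lambda/2}\e^{i\pi a} = \e^{i\pi\mu}$ yields the right-hand side of \eqref{eq:gfsum}.

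The principal obstacle is convergence: in the regime $2|\mu|<\lambda<1$ the real part of $a=\mu-\lambda/2$ is generally negative, so $t^{a-1}(1-t)^\lambda$ is not integrable at $t=0$ in the naive sense and the interchange of summation and integration cannot be justified directly at the endpoints. The cleanest remedy is to establish the identity first in a region where $\Re(a)>0$ (by temporarily restricting $\mu$), where each manipulation is routine, and then to extend to the stated range by meromorphic continuation in $\lambda$ and $\mu$, observing that both sides of \eqref{eq:gfsum} are meromorphic with the same pole set $\{\mu-\lambda/2+n=0 : n\ge0\}$. This is the careful treatment alluded to by the authors' remark that the singularities at zero must be handled with care.
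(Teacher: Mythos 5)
Your proposal is correct and follows essentially the same route as the paper: the authors also pass from Lewin's identity \eqref{eq:lspiegf} through the two incomplete-Beta/hypergeometric identities to the form $i\e^{i\pi\lambda/2}\left(B_1-B_{-1}\right)$ and then obtain \eqref{eq:gfsum} by the binomial expansion you carry out, with the convergence caveat they only allude to (``with care --- because of the singularities at zero''). Your explicit check that both ${}_2F_1$ reductions coincide and your continuation argument from the region $\Re(\mu-\lambda/2)>0$ simply supply the details the paper leaves implicit.
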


We now show how the log-sine integrals $\LsD{n}{k}{\pi}$ can quite
comfortably be extracted from \eqref{eq:gfsum} by differentiating
its right-hand side. The case $n=0$ is covered by:

\begin{proposition}\label{prop:diffn0}
  We have
  \begin{align*}
    \frac{\md^k}{\md\mu^k} \frac{\md^m}{\md\lambda^m} i
    \frac{\e^{i\pi\frac\lambda2} - \e^{i\pi\mu}}{\mu-\frac\lambda2}\bigg|_{\ontop{\lambda=0}{\mu=0}}
    = \frac{\pi}{2^m} (i \pi)^{m+k} B(m+1,k+1).
  \end{align*}
\end{proposition}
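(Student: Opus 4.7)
The plan is to remove the apparent singularity at $\mu=\lambda/2$ by rewriting the quotient as a one-parameter integral, after which the repeated differentiations reduce to moving derivatives inside the integral.

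First I would apply the classical divided-difference identity
\begin{equation*}
  \frac{f(a)-f(b)}{a-b}=\int_0^1 f'\bigl((1-t)b+ta\bigr)\id t,
\end{equation*}
valid for any entire $f$ (verified in one line by integrating $\frac{\md}{\md t}f((1-t)b+ta)=(a-b)f'((1-t)b+ta)$ from $0$ to $1$). Specializing to $f(x)=\e^{i\pi x}$, $a=\lambda/2$, $b=\mu$, and multiplying through by $i$ (to cancel the $-1$ picked up from swapping $a-b$ to $\mu-\lambda/2$), this recasts the quantity of interest as
\begin{equation*}
  i\,\frac{\e^{i\pi\lambda/2}-\e^{i\pi\mu}}{\mu-\lambda/2}=\pi\int_0^1 \e^{i\pi((1-t)\mu+t\lambda/2)}\id t,
\end{equation*}
which is now manifestly jointly entire in $(\lambda,\mu)$.

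Next I would differentiate $m$ times in $\lambda$ and $k$ times in $\mu$ under the integral sign; this is legitimate because the integrand is holomorphic in $(\lambda,\mu)$ with coefficients smooth in $t\in[0,1]$. Each $\partial_\lambda$ brings down a factor $i\pi t/2$ and each $\partial_\mu$ brings down $i\pi(1-t)$, so at $\lambda=\mu=0$ the exponential collapses to $1$ and the whole expression reduces to
\begin{equation*}
  \frac{\pi (i\pi)^{m+k}}{2^m}\int_0^1 t^m (1-t)^k \id t,
\end{equation*}
which is exactly the right-hand side, since the remaining Beta integral equals $B(m+1,k+1)$.

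The only non-mechanical step is the first one: spotting that the awkward difference quotient is really a divided difference of the exponential and thus admits the clean integral representation above. Once that representation is in hand, the proposition is a matter of bookkeeping the factors of $i\pi/2$ and $i\pi$ and invoking the Beta integral, so I anticipate no real obstacle beyond sign-tracking.
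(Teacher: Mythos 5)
Your proof is correct, and it takes a genuinely different route from the paper's. The paper proves the proposition by expanding the difference quotient as a double power series,
\begin{equation*}
  \frac{\e^x-\e^y}{x-y}=\sum_{m,k\ge0}\frac{x^my^k}{(m+k+1)!}=\sum_{m,k\ge0}B(m+1,k+1)\,\frac{x^m}{m!}\frac{y^k}{k!},
\end{equation*}
and then reading off the mixed Taylor coefficient after the substitution $x=i\pi\lambda/2$, $y=i\pi\mu$; you instead invoke the divided-difference (Hermite--Genocchi) integral representation $\frac{f(a)-f(b)}{a-b}=\int_0^1 f'((1-t)b+ta)\id t$ and differentiate under the integral sign, arriving at the Beta integral $\int_0^1 t^m(1-t)^k\id t$ directly. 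The two arguments are dual to one another --- the identity $B(m+1,k+1)=m!\,k!/(m+k+1)!$ is the common pivot --- but your version makes the removability of the singularity at $\mu=\lambda/2$ manifest from the outset and replaces the coefficient bookkeeping of a double series by a single differentiation under the integral, whereas the paper's version is a two-line verification once one spots the closed form of the series coefficients. Your sign-tracking checks out: the prefactor $i\cdot(-1)\cdot i\pi=\pi$ and the per-derivative factors $i\pi t/2$ and $i\pi(1-t)$ reproduce $\frac{\pi}{2^m}(i\pi)^{m+k}B(m+1,k+1)$ exactly. Either argument is complete; no gap.
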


\begin{proof}
  This may be deduced from
  \begin{align*}
    \frac{\e^x - \e^y}{x-y}
    &= \sum_{m,k\ge0} \frac{x^m y^k}{(k+m+1)!} \\
    &= \sum_{m,k\ge0} B(m+1,k+1) \frac{x^m}{m!} \frac{y^k}{k!}
  \end{align*}
  upon setting $x=i\pi\lambda/2$ and $y=i\pi\mu$.
\end{proof}

The next proposition is most helpful in differentiation of the right-hand side
of \eqref{eq:gfsum} for $n\ge1$, Here, we denote a \emph{multiple harmonic
number} by
\begin{equation}\label{eq:defharmonic}
  H_{n-1}^{[\alpha]}
  := \sum_{n>i_1>i_2>\ldots>i_{\alpha}}
  \frac{1}{i_1 i_2 \cdots i_{\alpha}}.
\end{equation}
If $\alpha=0$ we set $H_{n-1}^{[0]}:=1$.

\begin{proposition}\label{prop:diffbinom}
  For $n\ge1$
  \begin{equation}\label{eq:diffbinom}
   \frac{(-1)^\alpha}{\alpha!} \left( \frac{\md}{\md\lambda}\right)^{\alpha}
   \binom{\lambda}{n} \bigg|_{\lambda=0}
   = \frac{(-1)^{n}}{n} \,H_{n-1}^{[\alpha-1]}.
  \end{equation}
\end{proposition}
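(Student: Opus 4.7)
The plan is to prove the identity by directly expanding $\binom{\lambda}{n}$ as a polynomial in $\lambda$ around $\lambda = 0$ and reading off the coefficient of $\lambda^\alpha$. Since $\binom{\lambda}{n} = \lambda(\lambda-1)\cdots(\lambda-n+1)/n!$ vanishes at $\lambda=0$ for $n \ge 1$, the natural step is to factor out the $\lambda$ and rewrite the remaining product in a form in which the multiple harmonic sums $H_{n-1}^{[\alpha]}$ appear naturally.

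Concretely, I would first write
\begin{equation*}
  \binom{\lambda}{n}
  = \frac{(-1)^{n-1} \lambda}{n} \prod_{j=1}^{n-1} \left(1 - \frac{\lambda}{j}\right),
\end{equation*}
obtained by pulling out $\lambda$ and a sign from each of the factors $\lambda - j$ for $j = 1, \ldots, n-1$. Next, I would expand the product as
\begin{equation*}
  \prod_{j=1}^{n-1}\left(1 - \frac{\lambda}{j}\right)
  = \sum_{\alpha=0}^{n-1} (-\lambda)^{\alpha}
  \sum_{n > i_1 > \cdots > i_\alpha \ge 1} \frac{1}{i_1 \cdots i_\alpha},
\end{equation*}
recognizing the inner sum as precisely the elementary symmetric polynomial of $1, 1/2, \ldots, 1/(n-1)$, which matches the definition \eqref{eq:defharmonic} of $H_{n-1}^{[\alpha]}$.

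Combining these two expressions gives
\begin{equation*}
  \binom{\lambda}{n}
  = \sum_{\beta=1}^{n} \frac{(-1)^{n-\beta}}{n} \, H_{n-1}^{[\beta-1]} \, \lambda^{\beta},
\end{equation*}
after re-indexing with $\beta = \alpha + 1$. Applying $(d/d\lambda)^\alpha$ and evaluating at $\lambda = 0$ picks out the $\beta = \alpha$ term, supplying a factor of $\alpha!$, so that
\begin{equation*}
  \frac{(-1)^\alpha}{\alpha!}\left(\frac{\md}{\md\lambda}\right)^{\alpha}\binom{\lambda}{n}\bigg|_{\lambda=0}
  = \frac{(-1)^\alpha (-1)^{n-\alpha}}{n}\, H_{n-1}^{[\alpha-1]}
  = \frac{(-1)^n}{n}\, H_{n-1}^{[\alpha-1]},
\end{equation*}
which is \eqref{eq:diffbinom}. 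There is no real obstacle here; the only thing that requires a little care is the index-matching between the elementary symmetric polynomial in $1/1, 1/2, \ldots, 1/(n-1)$ and the nested-sum definition of $H_{n-1}^{[\alpha]}$, along with the tracking of the two sign factors $(-1)^{n-1}$ and $(-1)^\alpha$ that combine to produce the clean $(-1)^n$ on the right-hand side.
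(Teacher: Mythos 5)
Your proof is correct. The paper actually states Proposition~\ref{prop:diffbinom} without proof, treating it as routine; your argument---factoring $\binom{\lambda}{n}$ as $\frac{(-1)^{n-1}\lambda}{n}\prod_{j=1}^{n-1}(1-\lambda/j)$, expanding the product into elementary symmetric polynomials to recognize $H_{n-1}^{[\alpha]}$, and reading off the Taylor coefficient---is exactly the natural computation the authors evidently had in mind, and your sign bookkeeping checks out.
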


Note that, for $\alpha\ge0$,
\begin{align*}
  \sum_{n\ge0} \frac{(\pm1)^n}{n^\beta} H_{n-1}^{[\alpha]}
  = \Li_{\beta,\{1\}^\alpha}(\pm1)
\end{align*}
which shows that the evaluation of the log-sine integrals will involve Nielsen
polylogarithms at $\pm1$, that is polylogarithms of the type $\Li_{a,\{1\}^b}(\pm1)$.

Using the Leibniz rule coupled with Proposition \ref{prop:diffbinom} to
differentiate \eqref{eq:gfsum} for $n\ge1$ and Proposition \ref{prop:diffn0} in
the case $n=0$, it is possible to explicitly write $\LsD{n}{k}{\pi}$ as a
finite sum of Nielsen polylogarithms with coefficients only
being rational multiples of powers of $\pi$. The process is now exemplified
for $\LsD{4}{2}{\pi}$ and $\LsD{5}{1}{\pi}$.

\begin{example}\label{eg:ls42}
  \egcaption{$\LsD{4}{2}{\pi}$}
  To find $\LsD{4}{2}{\pi}$ we differentiate \eqref{eq:gfsum} once
  with respect to $\lambda$ and twice with respect to $\mu$. To
  simplify computation, we exploit the fact that the result will be
  real which allows us to neglect imaginary parts:
  \begin{align*}
    -\LsD{4}{2}{\pi} &= \frac{\md^2}{\md\mu^2} \frac{\md}{\md\lambda} i \sum_{n\ge0} \binom{\lambda}{n}
    \frac{(-1)^n \e^{i\pi\frac\lambda2} - \e^{i\pi\mu}}{\mu-\frac\lambda2+n}\bigg|_{\lambda=\mu=0} \\
    &= 2\pi \sum_{n\ge1} \frac{(-1)^{n+1}}{n^3}
    = \frac32\,\pi\zeta(3).
  \end{align*}
  In the second step we were able to drop the term corresponding to $n=0$
  because its contribution $-i\pi^4/24$ is purely imaginary as follows a priori
  from Proposition \ref{prop:diffbinom}.
  \qede
\end{example}

\begin{example}\label{eg:ls51}
  \egcaption{$\LsD{5}{1}{\pi}$}
  Similarly, setting
  \[ \Li^\pm_{a_1,\ldots,a_n} := \Li_{a_1,\ldots,a_n}(1) - \Li_{a_1,\ldots,a_n}(-1) \]
  we obtain $\LsD{5}{1}{\pi}$ as
  \begin{align*}
    -\LsD{5}{1}{\pi}
    ={}& \frac34 \sum_{n\ge1} \frac{8(1-(-1)^n)}{n^4}
    \left( n H_{n-1}^{[2]} - H_{n-1} \right) \\
    &+ \frac{6(1-(-1)^n)}{n^5} - \frac{\pi^2}{n^3} \\
    ={}& 6\Li^\pm_{3,1,1} - 6\Li^\pm_{4,1}
    + \frac92 \Li^\pm_5 - \frac34 \pi^2 \zeta(3) \\
    ={}& -6\Li_{3,1,1}(-1) + \frac{105}{32}\zeta(5) - \frac14\pi^2\zeta(3).
  \end{align*}
  The last form is what is automatically produced by our program, see Example
  \ref{eg:mathematica}, and is obtained from the previous expression by
  reducing the polylogarithms as discussed in Section \ref{sec:reducing}.
  \qede
\end{example}

The next example hints at the rapidly growing complexity of these integrals,
especially when compared to the evaluations given in Examples \ref{eg:ls42} and
\ref{eg:ls51}.

\begin{example}\label{eg:ls61}
  \egcaption{$\LsD{6}{1}{\pi}$}
  Proceeding as before we find
  \begin{align}
    -\LsD{6}{1}{\pi}
    ={}& -24\Li^\pm_{3,1,1,1} + 24\Li^\pm_{4,1,1} - 18\Li^\pm_{5,1} + 12\Li^\pm_{6} \nonumber\\
    &+ 3\pi^2\zeta(3,1) - 3\pi^2\zeta(4) + \frac{\pi^6}{480} \nonumber\\
    ={}& 24\Li_{3,1,1,1}(-1) - 18\Li_{5,1}(-1) \nonumber\\
    &+ 3\zeta(3)^2 - \frac{3}{1120}\pi^6. \label{eq:ls61}
  \end{align}
  In the first equality, the term $\pi^6/480$ is the one corresponding to $n=0$
  in \eqref{eq:gfsum} obtained from Proposition \ref{prop:diffn0}. The
  second form is again the automatically reduced output of our program.
  \qede
\end{example}

\begin{remark}\label{rk:lsxpiLi}
  From the form of \eqref{eq:gfsum} and \eqref{eq:diffbinom} we find that the
  log-sine integrals $\LsD{n}{k}{\pi}$ can be expressed in terms of $\pi$ and
  Nielsen polylogarithms at $\pm1$.  Using the duality results in
  \cite[\S 6.3, and Example 2.4]{b3l} the polylogarithms at $-1$ may be
  explicitly reexpressed as multiple polylogarithms at $1/2$. Some examples are
  given in \cite{logsin1}.

  Particular cases of Theorem \ref{thm:gfb} have been considered in \cite{lsjk}
  where explicit formulae are given for $\LsD{n}{k}{\pi}$ where $k=0,1,2$.
  \qede
\end{remark}

\subsection[Log-sine integrals at 2pi]{Log-sine integrals at $2\pi$}
\label{sec:ls2pi}

As observed by Lewin \cite[7.9.8]{lewin2}, log-sine integrals at $2\pi$ are
expressible in terms of zeta values only. If we proceed as in the case of
evaluations at $\pi$ in \eqref{eq:lspiegf} we find that the resulting integral
now becomes expressible in terms of gamma functions:
\begin{align}\label{eq:ls2piegf}
 -\sum_{n,k \ge 0} \LsD{n+k+1}{k}{2\pi} \frac{\lambda^n}{n!}\frac{(i\mu)^k}{k!}
 &= \int_0^{2\pi} \left( 2\sin\frac\theta2 \right)^\lambda \e^{i\mu\theta} \id\theta \nonumber\\
 &= 2\pi \e^{i\mu\pi} \binom{\lambda}{\frac\lambda2 + \mu}
\end{align}
The special case $\mu=0$, in the light of \eqref{eq:lsb2pi} which gives
$\Ls{n}{2\pi}=2\Ls{n}{\pi}$, recovers \eqref{eq:lsbpiegf}.

We may now extract log-sine integrals $\LsD{n}{k}{2\pi}$ in a similar way as
described in Section \ref{sec:lspi}.

\begin{example}\label{eg:ls52at2pi}
  For instance,
  \begin{align*}
    \LsD{5}{2}{2\pi}
    &= -\frac{13}{45} \pi^5.
  \end{align*}
  We remark that this evaluation is incorrectly given in \cite[(7.144)]{lewin2}
  as $7\pi^5/30$ underscoring an advantage of automated evaluations over
  tables (indeed, there are more misprints in \cite{lewin2} pointed out
  for instance in \cite{dk1-eps, lsjk}).
  \qede
\end{example}

\subsection{Log-sine-polylog integrals}

Motivated by the integrals $\op{LsLsc}_{k,i,j}$ defined in \cite{kalmykov-eps}
we show that the considerations of Section \ref{sec:lsxpi} can be extended to
more involved integrals including
\begin{align*}
  \LsD{n}{k}{\pi;d}
  &:=-\int_0^\pi \theta^k\log^{n-k-1}\left(2\sin\frac\theta 2\right)\Li_d(\e^{i\theta}) \id\theta.
\end{align*}
On expressing $\Li_d(\e^{i\theta})$ as a series, rearranging, and applying
Theorem \ref{thm:gfb}, we obtain the following exponential generating function
for $\LsD{n}{k}{\pi;d}$:

\begin{corollary}\label{ex:clls}
  For $d \ge 0$ we have
  \begin{align}\label{eq:gflsum}
    -\sum_{n,k \ge 0} &\LsD{n+k+1}{k}{\pi;d}\frac{\lambda^n}{n!}\frac{(i\mu)^k}{k!} \nonumber\\
    &= i \sum_{n\ge1} H_{n,d}(\lambda) \frac{\e^{i\pi\frac\lambda2}
    - (-1)^n \e^{i\pi\mu}}{\mu-\frac\lambda2+n}
  \end{align}
  where
  \begin{align}\label{def:hd}
    H_{n,d}(\lambda)
    &:= \sum_{k=0}^{n-1} \frac{(-1)^k \binom{\lambda}{k}}{(n-k)^d}.
  \end{align}
\end{corollary}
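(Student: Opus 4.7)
The plan is to mirror the derivation of Theorem \ref{thm:gfb} after absorbing $\Li_d(\e^{i\theta})$ into the weight. First I would unpack the EGF on the left-hand side: summing $\lambda^n/n!$ against $\log^n(2\sin(\theta/2))$ and $(i\mu)^k/k!$ against $\theta^k$ recovers
\[
-\sum_{n,k\ge 0}\LsD{n+k+1}{k}{\pi;d}\frac{\lambda^n}{n!}\frac{(i\mu)^k}{k!}
= \int_0^\pi \left(2\sin\tfrac{\theta}{2}\right)^{\lambda}\e^{i\mu\theta}\,\Li_d(\e^{i\theta})\,\md\theta.
\]
Next I would expand $\Li_d(\e^{i\theta})=\sum_{m\ge 1}\e^{im\theta}/m^d$ and interchange sum and integral, turning the right-hand side into $\sum_{m\ge 1}m^{-d}I_m(\lambda,\mu)$, where $I_m(\lambda,\mu):=\int_0^\pi (2\sin(\theta/2))^\lambda \e^{i(\mu+m)\theta}\md\theta$.

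The key step is then to apply Theorem \ref{thm:gfb} with $\mu$ replaced by $\mu+m$. Since $\e^{i\pi(\mu+m)}=(-1)^m\e^{i\pi\mu}$, this yields
\[
I_m(\lambda,\mu)=i\sum_{n\ge 0}\binom{\lambda}{n}\frac{(-1)^n\e^{i\pi\lambda/2}-(-1)^m\e^{i\pi\mu}}{\mu-\lambda/2+n+m}.
\]
Inserting this into the $1/m^d$-weighted sum and re-indexing via $N=n+m$ collapses the denominator to $\mu-\lambda/2+N$. The identity $(-1)^m=(-1)^N(-1)^n$ then factors the numerator as $(-1)^n[\e^{i\pi\lambda/2}-(-1)^N\e^{i\pi\mu}]$, so the $N$-independent pieces come out of the inner sum. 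What remains is $\sum_{n=0}^{N-1}(-1)^n\binom{\lambda}{n}/(N-n)^d$, which is precisely $H_{N,d}(\lambda)$ as defined in \eqref{def:hd}. This delivers the claimed identity.

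The only real subtlety is justifying the termwise integration and the subsequent rearrangement of a double series. For $d\ge 2$ the polylogarithm series converges absolutely and uniformly on $[0,\pi]$ and Fubini applies directly; for $d=0,1$ one can either exploit conditional convergence on $(0,\pi)$ with the $(2\sin(\theta/2))^\lambda$ factor taming the endpoint, or simply treat the whole statement as an identity of formal power series in $\lambda,\mu$, since each coefficient involves only finitely many $\binom{\lambda}{n}$ derivatives. A minor point is that Theorem \ref{thm:gfb} was stated in the strip $2|\mu|<\lambda<1$, but both sides are meromorphic in $\mu$ and continue analytically, so the shift $\mu\mapsto\mu+m$ is legitimate for every $m\ge 1$.
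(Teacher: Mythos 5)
Your proposal is correct and follows exactly the route the paper indicates for this corollary: expand $\Li_d(\e^{i\theta})$ as a series, interchange with the integral, apply Theorem~\ref{thm:gfb} with $\mu$ shifted to $\mu+m$, and reindex by $N=n+m$ so that the inner sum collapses to $H_{N,d}(\lambda)$. The paper only sketches this in one sentence, and your computation (including the sign bookkeeping via $(-1)^{N-n}=(-1)^N(-1)^n$) fills it in correctly.
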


We note for $0 \le \theta \le \pi$ that
$\Li_{-1}(\e^{i\theta})=-1/\left(2\sin \frac \theta2\right)^2$,
$\Li_0(\e^{i\theta})=-\frac12+\frac i2\cot \frac \theta2$, while
$\Li_1(\e^{i\theta}) =-\log \left (2\sin \frac \theta2\right
)+i\frac{\pi-\theta}2$, and $\Li_2(\e^{i\theta})
=\zeta(2)+\frac\theta2\left(\frac\theta2-\pi\right)+i\Cl{2}{\theta}$.

\begin{remark}
  Corresponding results for an arbitrary Dirichlet series
  $\op{L}_{\textbf{a},d}(x) := \sum_{n \ge 1} a_n x^n /n^{d}$ can
  be easily derived in the same fashion.
  Indeed, for
  \begin{align*}
    \LsD{n}{k}{\pi;\textbf{a},d}
    &:= -\int_0^\pi
    \theta^k\log^{n-k-1}\left(2\sin\frac\theta 2\right) \op{L}_{\textbf{a},d}(\e^{i\theta}) \id\theta
  \end{align*}
  one derives the exponential generating function \eqref{eq:gflsum} with
  $H_{n,d}(\lambda)$ replaced by
  \begin{align}
    H_{n,\textbf{a},d}(\lambda)
    &:= \sum_{k=0}^{n-1} \frac{(-1)^k \binom{\lambda}{k}a_{n-k}}{(n-k)^d}.
  \end{align}
  This allows for $\LsD{n}{k}{\pi;\textbf{a},d}$ to be extracted for many
  number theoretic functions. It does not however seem to cover any of the
  values of the ${\rm LsLsc}_{k,i,j}$ function defined in \cite{kalmykov-eps}
  that are not already covered by Corollary \ref{ex:clls}.
  \qede
\end{remark}

\section{Quasiperiodic properties}
\label{sec:lsperiodic}

As shown in \cite[(7.1.24)]{lewin2}, it follows from the periodicity of the
integrand that, for integers $m$,
\begin{align}\label{eq:lsperiod}
  \LsD{n}{k}{2m\pi}& - \LsD{n}{k}{2m\pi-\sigma} \nonumber\\
  &= \sum_{j=0}^k (-1)^{k-j} (2m\pi)^j \binom{k}{j} \LsD{n-j}{k-j}{\sigma}.
\end{align}
Based on this quasiperiodic property of the log-sine integrals, the results of
Section \ref{sec:ls2pi} easily generalize to show that log-sine integrals at
multiples of $2\pi$ evaluate in terms of zeta values.  This is shown in Section
\ref{sec:lsmpi}.  It then follows from \eqref{eq:lsperiod} that log-sine
integrals at general arguments can be reduced to log-sine integrals at
arguments $0\le\sigma\le\pi$. This is discussed briefly in Section
\ref{sec:lsreduce}.

\begin{example}
  In the case $k=0$, we have that
  \begin{equation}\label{eq:lsb2pi}
    \Ls{n}{2m\pi} = 2m \Ls{n}{\pi}.
  \end{equation}
  For $k=1$, specializing \eqref{eq:lsperiod} to $\sigma=2m\pi$ then yields
  \[ \LsD{n}{1}{2m\pi} = 2m^2\pi \Ls{n-1}{\pi} \]
  as is given in \cite[(7.1.23)]{lewin2}.
  \qede
\end{example}

\subsection[Log-sine integrals at multiples of 2pi]{Log-sine integrals at multiples of $2\pi$}
\label{sec:lsmpi}

For odd $k$, specializing \eqref{eq:lsperiod} to $\sigma=2m\pi$, we find
\begin{align*}
  2\LsD{n}{k}{2m\pi}
  &= \sum_{j=1}^k (-1)^{j-1} (2m\pi)^j \binom{k}{j} \LsD{n-j}{k-j}{2m\pi}
\end{align*}
giving $\LsD{n}{k}{2m\pi}$ in terms of lower order log-sine integrals.

More generally, on setting $\sigma=2\pi$ in \eqref{eq:lsperiod} and summing the
resulting equations for increasing $m$ in a telescoping fashion, we arrive at
the following reduction.
We will use the standard notation
\begin{equation*}
  H_n^{(a)} := \sum_{k=1}^n k^{-a}
\end{equation*}
for \emph{generalized harmonic sums}.

\begin{theorem}
  For integers $m\ge0$,
  \begin{align*}
    \LsD{n}{k}{2m\pi}
    = \sum_{j=0}^k (-1)^{k-j} (2\pi)^j \binom{k}{j} H_m^{(-j)} \LsD{n-j}{k-j}{2\pi}.
  \end{align*}
\end{theorem}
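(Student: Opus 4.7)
The plan is to use the quasiperiodic identity \eqref{eq:lsperiod} as a one-step recurrence in $m$ and then telescope. Specialising \eqref{eq:lsperiod} to $\sigma=2\pi$ gives
\begin{align*}
  \LsD{n}{k}{2m\pi} - \LsD{n}{k}{2(m-1)\pi}
  = \sum_{j=0}^k (-1)^{k-j} (2m\pi)^j \binom{k}{j} \LsD{n-j}{k-j}{2\pi},
\end{align*}
since $\LsD{n-j}{k-j}{2\pi}$ on the right-hand side arises from applying \eqref{eq:lsperiod} with $\sigma = 2\pi$ (so that $2m\pi - \sigma = 2(m-1)\pi$).

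Next, I would sum these equations for $m = 1, 2, \ldots, M$. The left-hand side collapses telescopically to $\LsD{n}{k}{2M\pi} - \LsD{n}{k}{0} = \LsD{n}{k}{2M\pi}$, using the trivial fact that $\LsD{n}{k}{0} = 0$ (which also handles the case $m=0$ in the statement, both sides being zero there since $H_0^{(-j)} = 0$). Exchanging the order of summation on the right yields
\begin{align*}
  \LsD{n}{k}{2M\pi}
  = \sum_{j=0}^k (-1)^{k-j} (2\pi)^j \binom{k}{j} \LsD{n-j}{k-j}{2\pi} \sum_{m=1}^M m^j.
\end{align*}

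Finally, by the definition $H_m^{(a)} = \sum_{l=1}^m l^{-a}$, we recognise $\sum_{m=1}^M m^j = H_M^{(-j)}$, which delivers the claimed formula after renaming $M$ back to $m$. There is no real obstacle here: the only thing to be careful about is ensuring that the telescoping is applied in the right direction (the recurrence must relate $2m\pi$ to $2(m-1)\pi$, not to $2(m+1)\pi$) and that the indexing in the inner sum matches the convention for $H_m^{(-j)}$ used in the paper. Since \eqref{eq:lsperiod} was derived purely from the $2\pi$-periodicity of $\log|2\sin(\theta/2)|$ and a binomial expansion of $\theta^k = ((\theta - 2m\pi) + 2m\pi)^k$, no further analytic input is needed.
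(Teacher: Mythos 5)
Your proof is correct and follows exactly the route the paper indicates: specialise \eqref{eq:lsperiod} to $\sigma=2\pi$, telescope over $m$, swap the order of summation, and identify $\sum_{m=1}^M m^j$ with $H_M^{(-j)}$. The handling of the base case $m=0$ and the direction of the telescoping are both fine.
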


Summarizing, we have thus shown that the generalized log-sine integrals at
multiples of $2\pi$ may always be evaluated in terms of integrals at $2\pi$. In
particular, $\LsD{n}{k}{2m\pi}$ can always be evaluated in terms of zeta values
by the methods of Section \ref{sec:ls2pi}.

\subsection{Reduction of arguments}
\label{sec:lsreduce}

A general (real) argument $\sigma$ can be written uniquely as
$\sigma=2m\pi\pm\sigma_0$ where $m\ge0$ is an integer and $0\le\sigma_0\le\pi$.
It then follows from \eqref{eq:lsperiod} and
\begin{equation*}
  \LsD{n}{k}{-\theta} = (-1)^{k+1} \LsD{n}{k}{\theta}
\end{equation*}
that $\LsD{n}{k}{\sigma}$ equals
\begin{align}
   \LsD{n}{k}{2m\pi}& \pm
  \sum_{j=0}^k (\pm1)^{k-j} (2m\pi)^j \binom{k}{j} \LsD{n-j}{k-j}{\sigma_0}.
\end{align}
Since the evaluation of log-sine integrals at multiples of $2\pi$ was
explicitly treated in Section \ref{sec:lsmpi} this implies that the
evaluation of log-sine integrals at general arguments $\sigma$ reduces
to the case of arguments $0\le\sigma\le\pi$.

\section{Evaluations at other values}
\label{sec:lsx}

In this section we first discuss a method for evaluating the generalized
log-sine integrals at arbitrary arguments in terms of Nielsen polylogarithms at
related arguments. The gist of our technique originates with Fuchs
(\cite{fuchs}, \cite[\S7.10]{lewin2}). Related evaluations appear in
\cite{dk0-eps} for $\Ls{3}{\tau}$ to $\Ls{6}{\tau}$ as well as in
\cite{dk1-eps} for $\Ls{n}{\tau}$ and $\LsD{n}{1}{\tau}$.

We then specialize to evaluations at $\pi/3$ in Section \ref{sec:lspi3}. The
polylogarithms arising in this case have been studied under the name
of \emph{multiple Clausen and Glaisher values} in \cite{mcv}.  In fact, the next
result \eqref{eq:lsb} with $\tau=\pi/3$ is a modified version of \cite[Lemma
3.2]{mcv}. We employ the notation
\begin{equation*}
  \binom{n}{a_1,\ldots,a_k} := \frac{n!}{a_1! \cdots a_k! (n-a_1-\ldots-a_k)!}
\end{equation*}
for multinomial coefficients.

\begin{theorem}\label{thm:ls2li}
  For $0\le\tau\le2\pi$, and nonnegative integers $n$, $k$ such that $n-k\ge2$,
  \begin{align}\label{eq:lsb}
    \zeta(&n-k,\{1\}^k) - \sum_{j=0}^{k} \frac{(-i\tau)^j}{j!}
    \Li_{2+k-j,\{1\}^{n-k-2}}(\e^{i\tau}) \nonumber\\
    &= \frac{i^{k+1}(-1)^{n-1}}{(n-1)!}
    \sum_{r=0}^{n-k-1} \sum_{m=0}^r \binom{n-1}{k,m,r-m} \nonumber\\
    &\quad\times\left( \frac{i}{2} \right)^r (-\pi)^{r-m} \LsD{n-(r-m)}{k+m}{\tau}.
  \end{align}
\end{theorem}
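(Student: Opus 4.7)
I would reduce \eqref{eq:lsb} to a single integral over $[0,\tau]$ via a telescoping argument, then expand the integrand using $\log(1-e^{i\theta}) = \log|2\sin(\theta/2)| + i(\theta-\pi)/2$ so that the generalized log-sine integrals emerge directly. To set this up, introduce $F(\tau) := \sum_{j=0}^{k} \frac{(-i\tau)^j}{j!}\,\Li_{2+k-j,\{1\}^{n-k-2}}(e^{i\tau})$, so that the left-hand side of \eqref{eq:lsb} is $\zeta(n-k,\{1\}^k) - F(\tau)$. Differentiating term by term and using $\frac{d}{d\tau}\Li_{a,\{1\}^{b}}(e^{i\tau}) = i\,\Li_{a-1,\{1\}^{b}}(e^{i\tau})$ (valid since $a = 2+k-j \ge 2$ throughout), the Leibniz contribution from $(-i\tau)^j/j!$ at index $j$ exactly cancels the polylogarithm-derivative contribution at index $j-1$; after telescoping only the $j=k$ boundary term survives, and since $\Li_{\{1\}^m}(z) = (-\log(1-z))^m/m!$, one obtains $F'(\tau) = \frac{i\,(-i\tau)^k}{k!\,(n-k-1)!}\bigl(-\log(1-e^{i\tau})\bigr)^{n-k-1}$.

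At $\tau=0$ only the $j=0$ summand of $F$ is nonzero, so $F(0) = \zeta(k+2,\{1\}^{n-k-2})$; by the duality theorem for multiple zeta values (cf.\ \cite{b3l}) this equals $\zeta(n-k,\{1\}^k)$. Hence $\zeta(n-k,\{1\}^k) - F(\tau) = -\int_0^\tau F'(\theta)\,\md\theta$, and the left-hand side of \eqref{eq:lsb} reduces to $\frac{(-1)^n i^{k+1}}{k!(n-k-1)!}\int_0^\tau \theta^k\log^{n-k-1}(1-e^{i\theta})\,\md\theta$. Substituting $\log(1-e^{i\theta}) = \log|2\sin(\theta/2)| + i(\theta-\pi)/2$ (valid on $0<\theta<2\pi$) and expanding the $(n-k-1)$-th power binomially in an index $r$, together with a further expansion $(\theta-\pi)^r = \sum_m \binom{r}{m}\theta^m(-\pi)^{r-m}$, converts each summand into $\int_0^\tau \theta^{k+m}\log^{n-k-1-r}|2\sin(\theta/2)|\,\md\theta = -\LsD{n-(r-m)}{k+m}{\tau}$. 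Collapsing $\binom{n-k-1}{r}\binom{r}{m}/(k!(n-k-1)!)$ into $\binom{n-1}{k,m,r-m}/(n-1)!$ and gathering the accumulated powers of $i$, factors of $2^{-r}$, and signs then produces the right-hand side of \eqref{eq:lsb}.

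The main obstacle I anticipate is the sign-and-power bookkeeping in this last step: one must verify that $(-i)^k$ from $(-i\tau)^k$, $(-1)^{n-k-1}$ from $(-\log)^{n-k-1}$, $i^r$ from $(i(\theta-\pi)/2)^r$, and the minus sign from the defining integral \eqref{eq:slxm} combine into the stated $i^{k+1}(-1)^{n-1}/(n-1)!$ prefactor, while the two binomial coefficients collapse cleanly into the multinomial $\binom{n-1}{k,m,r-m}$. The telescoping derivation of $F'$ is mechanical once the Leibniz rule is written out, and the MZV duality needed to identify $F(0)$ with $\zeta(n-k,\{1\}^k)$ can be invoked as a black box from \cite{b3l}.
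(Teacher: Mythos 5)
Your proposal is correct and is essentially the paper's own proof run in reverse: the paper obtains your telescoping identity for $F$ by repeated integration by parts of $\int_1^\alpha \Li_{k-1,\{1\}^n}(z)\,z^{-1}\id z$, whereas you verify it by differentiating and cancelling, but the key ingredients --- the identity $\Li_{\{1\}^m}(z)=(-\log(1-z))^m/m!$, the decomposition \eqref{eq:log2logsin}, the binomial/multinomial expansion in $r$ and $m$, and MZV duality to identify the constant term --- are identical. The sign and power bookkeeping you flag does work out exactly as stated.
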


\begin{proof}
  Starting with
  \begin{equation*}
    \Li_{k,\{1\}^n}(\alpha) - \Li_{k,\{1\}^n}(1)
    = \int_1^\alpha \frac{\Li_{k-1,\{1\}^n}(z)}{z} \id z
  \end{equation*}
  and integrating by parts repeatedly, we obtain
  \begin{align}
    \sum_{j=0}^{k-2}& \frac{(-1)^j}{j!} \log^j(\alpha) \Li_{k-j,\{1\}^n}(\alpha)
    - \Li_{k,\{1\}^n}(1) \nonumber\\
    &= \frac{(-1)^{k-2}}{(k-2)!} \int_1^\alpha \frac{\log^{k-2}(z)
    \Li_{\{1\}^{n+1}}(z)}{z} \id z. \label{eq:liparts}
  \end{align}
  Letting $\alpha=\e^{i\tau}$ and changing variables to $z=\e^{i\theta}$, as
  well as using
  \begin{equation*}
    \Li_{\{1\}^n}(z) = \frac{(-\log(1-z))^n}{n!},
  \end{equation*}
  the right-hand side of \eqref{eq:liparts} can be rewritten as
  \begin{equation*}
    \frac{(-1)^{k-2}}{(k-2)!} \frac{i}{(n+1)!}
    \int_0^\tau (i\theta)^{k-2} \left( - \log\left(1-\e^{i\theta}\right) \right)^{n+1} \id\theta.
  \end{equation*}
  Since, for $0\le\theta\le2\pi$ and the principal branch of the logarithm,
  \begin{equation}\label{eq:log2logsin}
    \log(1-\e^{i\theta}) = \log\left| 2\sin\frac\theta2 \right| + \frac{i}{2}(\theta-\pi),
  \end{equation}
  this last integral can now be expanded in terms of generalized log-sine
  integrals at $\tau$.
  \begin{align}
    \zeta(k,\{1\}^n) &- \sum_{j=0}^{k-2} \frac{(-i\tau)^j}{j!}
    \Li_{k-j,\{1\}^n}(\e^{i\tau}) \nonumber\\
    ={}& \frac{(-i)^{k-1}}{(k-2)!}\frac{(-1)^n}{(n+1)!}
    \sum_{r=0}^{n+1} \sum_{m=0}^r \binom{n+1}{r} \binom{r}{m} \nonumber\\
    &\quad\quad\left( \frac{i}{2} \right)^r (-\pi)^{r-m} \LsD{n+k-(r-m)}{k+m-2}{\tau}.
  \end{align}
  Applying the \emph{MZV duality formula} \cite{b3l}, we have
  \begin{equation*}
    \zeta(k,\{1\}^n) = \zeta(n+2,\{1\}^{k-2}),
  \end{equation*}
  and a change of variables yields the claim.
\end{proof}

We recall that the real and imaginary parts of the multiple polylogarithms are
Clausen and Glaisher functions as defined in \eqref{eq:defcl} and
\eqref{eq:defgl}.

\begin{example}\label{eg:ls41tau}
  Applying \eqref{eq:lsb} with $n=4$ and $k=1$ and solving for
  $\LsD{4}{1}{\tau}$ yields
  \begin{align*}
    \LsD{4}{1}{\tau}
    &= 2\zeta(3,1) - 2\Gl{3,1}{\tau} - 2\tau\Gl{2,1}{\tau} \\
    & + \frac14\LsD{4}{3}{\tau} - \frac12\pi\LsD{3}{2}{\tau} + \frac14\pi^2\LsD{2}{1}{\tau} \\
    &= \frac{1}{180}\pi^4 - 2\Gl{3,1}{\tau} - 2\tau\Gl{2,1}{\tau} \\
    & - \frac{1}{16}\tau^4 + \frac16\pi\tau^3 - \frac18\pi^2\tau^2.
  \end{align*}
  For the last equality we used the trivial evaluation
  \begin{equation}
    \LsD{n}{n-1}{\tau} = - \frac{\tau^n}{n}.
  \end{equation}
  It appears that both $\Gl{2,1}{\tau}$ and $\Gl{3,1}{\tau}$ are not reducible
  for $\tau=\pi/2$ or $\tau=2\pi/3$. Here, reducible means expressible in terms
  of multi zeta values and Glaisher functions of the same argument and lower
  weight.  In the case $\tau=\pi/3$ such reductions are possible. This is
  discussed in Example \ref{eg:ls41pi3} and illustrates how much less simple
  values at $2\pi/3$ are than those at $\pi/3$. We remark, however, that
  $\Gl{2,1}{2\pi/3}$ is reducible to one-dimensional polylogarithmic terms
  \cite{logsin1}. In \cite{logsin2} explicit reductions for all weight four
  or less polylogarithms are given.
  \qede
\end{example}

\begin{remark}
   Lewin \cite[7.4.3]{lewin2} uses the special case $k=n-2$ of \eqref{eq:lsb}
   to deduce a few  small integer evaluations of the log-sine integrals
   $\LsD{n}{n-2}{\pi/3}$ in terms of classical Clausen functions.
  \qede
\end{remark}

In general, we can use \eqref{eq:lsb} recursively to express the log-sine
values $\LsD{n}{k}{\tau}$ in terms of multiple Clausen and Glaisher functions
at $\tau$.

\begin{example}
  \eqref{eq:lsb} with $n=5$ and $k=1$ produces
  \begin{align*}
    \LsD{5}{1}{\tau}
    &= -6\zeta(4,1) + 6\Cl{3,1,1}{\tau} + 6\tau\Cl{2,1,1}{\tau} \\
    & + \frac34\LsD{5}{3}{\tau} - \frac32\pi\LsD{4}{2}{\tau} + \frac34\pi^2\LsD{3}{1}{\tau}.
  \end{align*}
  Applying \eqref{eq:lsb} three more times to rewrite the remaining log-sine
  integrals produces an evaluation of $\LsD{5}{1}{\tau}$ in terms of multi zeta
  values and Clausen functions at $\tau$.
  \qede
\end{example}

\subsection[Log-sine integrals at pi/3]{Log-sine integrals at $\pi/3$}
\label{sec:lspi3}

We now apply the general results obtained in Section \ref{sec:lsx} to the
evaluation of log-sine integrals at $\tau=\pi/3$. Accordingly, we encounter
multiple polylogarithms at the basic $6$-th root of unity
$\omega:=\exp(i\pi/3)$. Their real and imaginary parts satisfy various
relations and reductions, studied in \cite{mcv}, which allow us to further
treat the resulting evaluations. In general, these polylogarithms are more
tractable than those at other values because $\overline{\omega}=\omega^2$.

\begin{example}\label{eg:ls41pi3}
  \egcaption{Values at $\frac\pi3$}
  Continuing Example \ref{eg:ls41tau} we have
  \begin{align*}
    -\LsD{4}{1}{\frac\pi3}
    &= 2\mgl{3,1} + \frac23\pi\mgl{2,1} + \frac{19}{6480}\pi^4.
  \end{align*}
  Using known reductions from \cite{mcv} we get:
  \begin{equation}\label{eq:mgl31reduce}
    \mgl{2,1} = \frac{1}{324}\pi^3,\quad
    \mgl{3,1} = -\frac{23}{19440}\pi^4,
  \end{equation}
  and so arrive at
  \begin{equation}\label{eq:queer}
    -\LsD{4}{1}{\frac\pi3} = \frac{17}{6480} \pi^4.
  \end{equation}
  Lewin explicitly mentions \eqref{eq:queer} in the preface to \cite{lewin2}
  because of its ``queer'' nature which he compares to some of Landen's curious
  18th century formulas.
  \qede
\end{example}

Many more reduction besides \eqref{eq:mgl31reduce} are known.  In particular,
the one-dimensional Glaisher and Clausen functions reduce as follows
\cite{lewin2}:
\begin{align}\label{eq:cl1d}
  \Gl{n}{2\pi x} &= \frac{2^{n-1}(-1)^{1+\lfloor n/2 \rfloor}}{n!} B_n(x)\, \pi^n , \nonumber\\
  \mcl{2n+1} &= \frac{1}{2} (1-2^{-2n}) (1-3^{-2n}) \zeta(2n+1).
\end{align}
Here, $B_n$ denotes the $n$-th \emph{Bernoulli polynomial}. Further reductions
can be derived for instance from the duality result \cite[Theorem 4.4]{mcv}.  For
low dimensions, we have built these reductions into our program, see Section
\ref{sec:reducing}.

\begin{example}\label{eg:lspi3}
  \egcaption{Values of $\Ls{n}{\pi/3}$}
  The log-sine integrals at $\pi/3$ are evaluated by our program as follows:
  \begin{align*}
    \Ls{2}{\frac\pi3} &= \mcl{2} \\
    -\Ls{3}{\frac\pi3} &= \frac{7}{108}\, \pi^3 \\
    \Ls{4}{\frac\pi3} &= \frac12\pi\,\zeta(3)+\frac 92\,\mcl{4} \\
    -\Ls{5}{\frac\pi3} &= \frac{1543}{19440}\pi^5 - 6\mgl{4,1} \\
    \Ls{6}{\frac\pi3} &= \frac{15}2\pi \,\zeta(5) + \frac{35}{36}\,\pi^3
      \zeta(3) + \frac{135}{2}\,\mcl{6} \\
    -\Ls{7}{\frac\pi3} &= \frac{74369}{326592}\pi^7 + \frac{15}{2}\,\pi
      \zeta(3)^2 - 135\,\mgl{6,1} 
  \end{align*}
  As follows from the results of Section \ref{sec:lsx} each integral is a
  multivariable rational polynomial in $\pi$ as well as $\op{Cl}$, $\op{Gl}$,
  and zeta values. These evaluations confirm those given in \cite[Appendix
  A]{dk1-eps} for $\Ls{3}{\frac\pi3}$, $\Ls{4}{\frac\pi3}$, and
  $\Ls{6}{\frac\pi3}$.  Less explicitely, the evaluations of
  $\Ls{5}{\frac\pi3}$ and $\Ls{7}{\frac\pi3}$ can be recovered from similar
  results in \cite{lsjk, dk1-eps} (which in part were obtained using PSLQ; we
  refer to Section \ref{sec:reducing} for how our analysis relies on PSLQ).

  The first presumed-irreducible value that occurs is
  \begin{align}\label{eq:mgl41}
    \mgl{4,1} &= \sum_{n=1}^\infty \frac{\sum_{k=1}^{n-1}\frac{1}{k}}{n^4} \,
      \sin\left( \frac{n \pi} 3\right) \nonumber\\
    &= \frac{3341}{1632960} \pi^5 - \frac{1}{\pi}\zeta(3)^2
    - \frac{3}{4\pi} \sum_{n=1}^\infty \frac{1}{\binom{2n}{n} n^6}.
  \end{align}
  The final evaluation is described in \cite{mcv}.  Extensive computation
  suggests it is not expressible as a sum of products of one dimensional
  Glaisher and zeta values. Indeed, conjectures are made in \cite[\S5]{mcv} for
  the number of irreducibles at each depth. Related dimensional conjectures for
  polylogs are discussed in \cite{zlobin}.
  \qede
\end{example}

\subsection{Log-sine integrals at imaginary values}
\label{sec:lsimag}

The approach of Section \ref{sec:lsx} may be extended to evaluate log-sine integrals
at imaginary arguments. In more usual terminology, these are \emph{log-sinh integrals}
\begin{equation}\label{eq:lsh}
  \LshD{n}{k}{\sigma}
  := - \int_{0}^{\sigma}\theta^k\,\log^{n-1-k} \left|2\,\sinh \frac \theta 2\right| \,{\md\theta}
\end{equation}
which are related to log-sine integrals by
\begin{equation*}
  \LshD{n}{k}{\sigma} = (-i)^{k+1} \LsD{n}{k}{i\sigma}.
\end{equation*}

We may derive a result along the lines of Theorem \ref{thm:ls2li} by observing
that equation \eqref{eq:log2logsin} is replaced, when $\theta=i t$ for $t>0$, by the
simpler
\begin{equation}\label{eq:log2logsinh}
  \log(1-\e^{-t}) = \log\left| 2\sinh\frac{t}{2} \right| - \frac{t}{2}.
\end{equation}
This leads to:

\begin{theorem}\label{thm:lsh2li}
  For $t>0$, and nonnegative integers $n$, $k$ such that $n-k\ge2$,
  \begin{align}\label{eq:lshb}
    \zeta(&n-k,\{1\}^k) - \sum_{j=0}^{k} \frac{t^j}{j!}
    \Li_{2+k-j,\{1\}^{n-k-2}}(\e^{-t}) \nonumber\\
    &= \frac{(-1)^{n+k}}{(n-1)!}
    \sum_{r=0}^{n-k-1} \binom{n-1}{k,r}
    \left( -\frac{1}{2} \right)^r \LshD{n}{k+r}{t}.
  \end{align}
\end{theorem}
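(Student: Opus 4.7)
The plan is to mirror the proof of Theorem \ref{thm:ls2li} almost step for step, with the trigonometric parametrization $\alpha = e^{i\tau}$, $z = e^{i\theta}$ replaced by the real one $\alpha = e^{-t}$, $z = e^{-s}$. Starting from
$$\Li_{k,\{1\}^n}(\alpha) - \Li_{k,\{1\}^n}(1) = \int_1^\alpha \frac{\Li_{k-1,\{1\}^n}(z)}{z}\id z,$$
I would integrate by parts $k-2$ times to obtain the analogue of \eqref{eq:liparts}. Substituting $\alpha = e^{-t}$ gives $\log\alpha = -t$, so the factor $(-1)^j \log^j\alpha/j!$ on the left turns into $t^j/j!$ (the two minus signs cancel), matching the desired sum $\sum_j (t^j/j!)\,\Li_{2+k-j,\{1\}^{n-k-2}}(e^{-t})$ once indices are relabelled.

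On the right, the change of variables $z = e^{-s}$ (with $\id z/z = -\id s$ and $\log z = -s$) together with $\Li_{\{1\}^{n+1}}(z) = (-\log(1-z))^{n+1}/(n+1)!$ reduces the integral, up to an overall constant, to $\int_0^t s^{k-2}\bigl(-\log(1-e^{-s})\bigr)^{n+1}\id s$. The key simplification over the trigonometric case is that \eqref{eq:log2logsinh} expresses $-\log(1-e^{-s}) = s/2 - \log|2\sinh(s/2)|$ as a \emph{binomial}, rather than the trinomial $-\log|2\sin(\theta/2)| - i\theta/2 + i\pi/2$ arising from \eqref{eq:log2logsin}. Consequently, the binomial theorem produces a single sum over $r$ of integrals of the form $\int_0^t s^{k-2+r}\log^{n+1-r}|2\sinh(s/2)|\id s$, each of which is $-\LshD{n+k}{k-2+r}{t}$ by definition \eqref{eq:lsh}.

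It then remains to reindex $(k,n)_{\text{proof}} \mapsto (k+2,n-k-2)_{\text{theorem}}$, invoke the MZV duality $\zeta(k,\{1\}^n) = \zeta(n+2,\{1\}^{k-2})$ exactly as at the end of the proof of Theorem \ref{thm:ls2li} to convert the zeta term into $\zeta(n-k,\{1\}^k)$, and simplify $\binom{n-k-1}{r}/\bigl(k!(n-k-1)!\bigr) = \binom{n-1}{k,r}/(n-1)!$ to collect the prefactor into the stated multinomial form $\binom{n-1}{k,r}(-1/2)^r/(n-1)!$. The residual signs from $(-s)^{k-2}$, the Jacobian, and the binomial expansion $(-L)^{n+1-r}$ combine to give the advertised $(-1)^{n+k}$. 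The only real obstacle is this sign and index bookkeeping; there are no analytic subtleties, since for $s>0$ one has $\sinh(s/2)>0$ and $1-e^{-s}>0$, so both logarithms are real on their principal branches and every step is valid termwise.
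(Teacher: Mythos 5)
Your proposal is correct and is precisely the argument the paper intends: the paper itself only remarks that Theorem~\ref{thm:lsh2li} follows ``along the lines of Theorem~\ref{thm:ls2li}'' with \eqref{eq:log2logsin} replaced by the simpler binomial \eqref{eq:log2logsinh}, which is exactly what you carry out. Your sign and index bookkeeping checks out (including the overall sign flip coming from the theorem stating $\zeta - \sum$ versus \eqref{eq:liparts} stating $\sum - \zeta$, the orientation of $\int_1^{e^{-t}}$, and the identity $\binom{n-k-1}{r}/\bigl(k!\,(n-k-1)!\bigr)=\binom{n-1}{k,r}/(n-1)!$), so nothing further is needed.
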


\begin{example}
  Let $\rho:=(1+\sqrt{5})/2$ be the golden mean.
  Then, by applying Theorem \ref{thm:lsh2li} with $n=3$ and $k=1$,
  \begin{align*}
    \LshD{3}{1}{2\log\rho}
    ={}& \zeta(3) - \frac43 \log^3\rho \\
    &- \Li_3(\rho^{-2}) - 2\Li_2(\rho^{-2}) \log\rho.
  \end{align*}
  This may be further reduced, using $\Li_2(\rho^{-2}) = \frac{\pi^2}{15} -
  \log^2\rho$ and $\Li_3(\rho^{-2}) = \frac45\zeta(3) -
  \frac{2}{15}\pi^2\log\rho + \frac23\log^3\rho$, to yield the well-known
  \begin{equation*}
    \LshD{3}{1}{2\log\rho} = \frac{1}{5} \zeta(3).
  \end{equation*}
  The interest in this kind of evaluation stems from the fact that log-sinh
  integrals at $2\log\rho$ express values of alternating inverse binomial sums
  (the fact that log-sine integrals at $\pi/3$ give inverse binomial sums is
  illustrated by Example \ref{eg:lspi3} and \eqref{eq:mgl41}). In this case,
  \begin{equation*}
    \LshD{3}{1}{2\log\rho}
    = \frac12 \sum_{n=1}^\infty \frac{(-1)^{n-1}}{\binom{2n}{n} n^3}.
  \end{equation*}
  More on this relation  and generalizations can be found in each of
  \cite{williams-logsin, kv0-bin, mcv, bbg}.
  \qede
\end{example}

\section{Reducing polylogarithms}
\label{sec:reducing}

The techniques described in Sections \ref{sec:lsxpi} and \ref{sec:lsx} for
evaluating log-sine integrals in terms of multiple polylogarithms usually
produce expressions that can be considerably reduced as is illustrated in
Examples \ref{eg:ls51}, \ref{eg:ls61}, and \ref{eg:ls41pi3}. Relations between
polylogarithms have been the subject of many studies \cite{b3l,bbg} with a
special focus on (alternating) multiple zeta values
\cite{koelbig82-nielsen,hoffman-mzv,zlobin} and, to a lesser extent, Clausen
values \cite{mcv}.

There is a certain deal of choice in how to combine the various techniques
that we present in order to evaluate log-sine integrals at certain values.
The next example shows how this can be exploited to derive relations among the
various polylogarithms involved.

\begin{example}\label{eg:ls2mpi}
  For $n=5$ and $k=2$, specializing \eqref{eq:lsperiod} to $\sigma=\pi$ and $m=1$ yields
  \begin{align*}
    \LsD{5}{2}{2\pi} = 2\LsD{5}{2}{\pi} - 4\pi\LsD{4}{1}{\pi} + 4\pi^2\Ls{3}{\pi}.
  \end{align*}
  By Example \ref{eg:ls52at2pi} we know that this evaluates as $-13/45 \pi^5$.
  On the other hand, we may use the technique of Section \ref{sec:lsxpi} to reduce
  the log-sine integrals at $\pi$. This leads to
  \begin{equation*}
    -8\pi\Li_{3,1}(1) + 12\pi\Li_4(1) - \frac25 \pi^5 = -\frac{13}{45} \pi^5.
  \end{equation*}
  In simplified terms, we have derived the famous identity $\zeta(3,1) =
  \frac{\pi^4}{360}$. Similarly, the case $n=6$ and $k=2$ leads to
  $\zeta(3,1,1) = \frac32\zeta(4,1) + \frac{1}{12}\pi^2\zeta(3) - \zeta(5)$
  which further reduces to $2\zeta(5) - \frac{\pi^2}{6}\zeta(3)$.  As a final
  example, the case $n=7$ and $k=4$ produces $\zeta(5,1)=\frac{\pi^6}{1260} -
  \frac12\zeta(3)^2$.
  \qede
\end{example}

For the purpose of an implementation, we have built many reductions of multiple
polylogarithms into our program. Besides some general rules, such as
\eqref{eq:cl1d}, the program contains a table of reductions at low weight for
polylogarithms at the values $1$ and $-1$, as well as Clausen and Glaisher
functions at the values $\pi/2$, $\pi/2$, and $2\pi/3$.  These correspond to
the polylogarithms that occur in the evaluation of the log-sine integrals at
the special values $\pi/3$, $\pi/2$, $2\pi/3$, $\pi$ which are of particular
importance for applications as mentioned in the introduction. This table of
reductions has been compiled using the integer relation finding algorithm PSLQ
\cite{bbg}. Its use is thus of heuristic nature (as opposed to the rest of the
program which is working symbolically from the analytic results in this paper)
and is therefore made optional.

\pagebreak

\section{The program}
\label{sec:program}

\subsection{Basic usage}

\lstset{language=Mathematica,basicstyle=\ttfamily}

As promised, we implemented\footnote{The packages are freely available for
download from\\\url{http://arminstraub.com/pub/log-sine-integrals}}
the presented results for evaluating log-sine integrals for use in the computer
algebra systems \emph{Mathematica} and SAGE.  The basic usage is very simple
and illustrated in the next example for \emph{Mathematica}\footnote{The
interface in the case of SAGE is similar but may change slightly, especially as
we hope to integrate our package into the core of SAGE.}.

\begin{example}\label{eg:mathematica}
  Consider the log-sine integral $\LsD{5}{2}{2\pi}$. The following
  self-explanatory code evaluates it in terms of polylogarithms:
  \begin{lstlisting}
  LsToLi[Ls[5,2,2Pi]]
  \end{lstlisting}
  This produces the output $-13/45\pi^5$ as in Example \ref{eg:ls52at2pi}.
  As a second example,
  \begin{lstlisting}
  -LsToLi[Ls[5,0,Pi/3]]
  \end{lstlisting}
  results in the output
  \begin{lstlisting}
  1543/19440*Pi^5 - 6*Gl[{4,1},Pi/3]
  \end{lstlisting}
  which agrees with the evaluation in Example \ref{eg:lspi3}.
  Finally,
  \begin{lstlisting}
  LsToLi[Ls[5,1,Pi]]
  \end{lstlisting}
  produces
  \begin{lstlisting}
  6*Li[{3,1,1},-1] + (Pi^2*Zeta[3])/4
  - (105*Zeta[5])/32
  \end{lstlisting}
  as in Example \ref{eg:ls51}.
  \qede
\end{example}

\begin{example}
  Computing
  \begin{lstlisting}
  LsToLi[Ls[6,3,Pi/3]-2*Ls[6,1,Pi/3]]
  \end{lstlisting}
  yields the value $\frac{313}{204120}\pi^6$ and thus automatically proves a
  result of Zucker \cite{zucker-logsin}.  A family of relations between
  log-sine integrals at $\pi/3$ generalizing the above has been established in
  \cite{williams-logsin}.
  \qede
\end{example}

\subsection{Implementation}

The conversion from log-sine integrals to polylogarithmic values demonstrated
in Example \ref{eg:mathematica} roughly proceeds as follows:
\begin{itemize}\addtolength{\itemsep}{-0.5\baselineskip}
  \item First, the evaluation of $\LsD{n}{k}{\sigma}$ is reduced to the cases
    of $0\le\sigma\le\pi$ and $\sigma=2m\pi$ as described in Section
    \ref{sec:lsreduce}.
  \item The cases $\sigma=2m\pi$ are treated as in Section \ref{sec:ls2pi} and
    result in multiple zeta values.
  \item The other cases $\sigma$ result in polylogarithmic values at
    $e^{i\sigma}$ and are obtained using the results of Sections
    \ref{sec:lsxpi} and \ref{sec:lsx}.
  \item Finally, especially in the physically relevant cases, various
    reductions of the resulting polylogarithms are performed as outlined in
    Section \ref{sec:reducing}.
\end{itemize}

\subsection{Numerical usage}

The program is also useful for numerical computations provided that it is
coupled with efficient methods for evaluating polylogarithms to high precision.
It complements for instance the C++ library \texttt{lsjk} ``for
arbitrary-precision numeric evaluation of the generalized log-sine functions''
described in \cite{lsjk}.

\begin{example}
  We evaluate
  \begin{align*}
    \LsD{5}{2}{\frac{2\pi}{3}}
    ={}& 4\Gl{4,1}{\frac{2\pi}{3}} - \frac83\pi \Gl{3,1}{\frac{2\pi}{3}} \\
    &- \frac89 \pi^2 \Gl{2,1}{\frac{2\pi}{3}} - \frac{8}{1215}\pi^5.
  \end{align*}
  Using specialized code\footnote{The C++ code we used is based on the fast
  H\"older transform described in \cite{b3l}, and is available on request.}
  such as \cite{polylog-num}, the right-hand side is readily evaluated to, for
  instance, two thousand digit precision in about a minute. The first 1024
  digits of the result match the evaluation given in \cite{lsjk}. However, due
  to its implementation \texttt{lsjk} currently is restricted to log-sine
  functions $\LsD{n}{k}{\theta}$ with $k\le9$.
  \qede
\end{example}

\paragraph{Acknowledgements}
We are grateful to Andrei Davydychev and Mikhail Kal\-my\-kov for several valuable
comments on an earlier version of this paper and for pointing us to relevant
publications. We also thank the reviewers for their thorough reading and
helpful suggestions.

\bibliography{logsin-refs}
\bibliographystyle{abbrveprint}

\end{document}